\numberwithin{equation}{section}
\newtheorem{thm}{Theorem}[section]
\newtheorem{lemma}[thm]{Lemma}
\newtheorem{q}{Question}
\newtheorem*{remark}{Remark}
\newcommand{\bep}{\begin{prob}}
\newcommand{\ep}{\end{prob}}
\newcommand{\be}{\begin{equation}}
\newcommand{\ee}{\end{equation}}
\newcommand{\CC}{\mathbb{C}}
\newcommand{\ol}{\overline}
\begin{document}
\author{Dmitry Khavinson and Erik Lundberg}

\title{ A note on arclength null quadrature domains }

\maketitle

{\centering\footnotesize \emph{This paper is dedicated to the fond memory of Harold S. Shapiro.} \par \bigskip}

\begin{abstract}
    We prove the existence of a roof function for  arclength null
    quadrature domains having finitely many boundary components.
    This bridges a gap toward classification
    of arclength null quadrature domains
    by removing an a priori assumption from previous classification results.
\end{abstract}

\section{ Introduction }




A domain $\Omega \subset \CC$ is referred to as an
\emph{arclength null-quadrature domain} (arclength NQD) if the identity
\begin{equation}\label{eq:ALNQD}
\int_{\partial \Omega} g(z) ds(z) = 0,
\end{equation}
is satisfied for all functions $g$
in the Smirnov space $E^1(\Omega)$
(a class of analytic functions suitable for integration along $\partial \Omega$,
see below),
where $ds(z)$ denotes the arclength element.

In order for the test class $E^1(\Omega)$ to be non-trivial, one should impose local rectifiability of the boundary $\partial \Omega$.  In this paper, we will further impose a smoothness assumption on $\partial \Omega$, see Theorem \ref{thm:main}.  We note that some regularity such as smoothness is needed if one wants to avoid the exotic pathological non-Smirnov examples of arclength NQD constructed in \cite[Sec. 2]{KLT}.

\smallskip

Arclength NQDs are related to a free boundary problem for the Laplace equation (a correspondence that we will strengthen in this note).
Additional motivation for studying arclength NQDs comes from fluid dynamics \cite{EL}, \cite{MT2} and minimal surfaces \cite{MT1}.
Arclength NQDs appear as blow up solutions 
in a class of free boundary problems \cite{JeKa}, \cite{JeKa2016}, \cite{Henrik}.

The following problem was stated in \cite{KLT} and restated with discussion in \cite{EL}, \cite{Lopen}.

\noindent {\bf Problem:} Classify arclength NQDs. 

\smallskip

The related problem of classifying 
\emph{area null-quadrature domains} (area NQDs), where the integration is over $
\Omega$ with respect to area measure, was completely solved
in 1981 by M. Sakai \cite{Sakai} who showed that
area NQDs fall into one of the following four cases:

\begin{itemize}
 \item the exterior of an ellipse
 \item the (non-convex) exterior of a parabola 
 \item a halfplane
 \item a domain whose boundary is a proper subset of a line
\end{itemize}

The halfplane and the exterior of a disk are NQDs for both area and arclength.  The other examples constructed in \cite{EL} show that the class of arclength NQDs is quite rich and includes multiply-connected examples with boundary curves parameterized by elliptic functions.

Area and arclength NQDs have natural higher-dimensional analogs of
volume and surface area NQDs (respectively) using appropriate test classes of harmonic functions.
The classification of such volume NQDs is an ongoing investigation, see \cite{FrSa}, \cite{Karp}, \cite{Sha}, and the references therein.
Classification of surface area NQDs is an interesting uncharted territory.

While the classification of planar area NQDs is completely resolved by Sakai's results, the classification problem for arclength NQDs remains open, and progress has been stifled by a nagging question, stated below, concerning the existence of a so-called roof function
for arclength NQDs.

\subsection{Domains that admit a roof function}\label{sec:q}


A sufficient condition for a domain $\Omega$ to be an arclength NQD is that $\Omega$ admits a \emph{roof function}, a positive function $u$ harmonic in $\Omega$ such that the gradient $\nabla u$ coincides with the inward-pointing unit normal
vector along $\partial \Omega$.
Note that this boundary condition is stronger than a mere Neumann condition 
since it is imposed on the \emph{gradient} (not just the normal derivative),
and it implies that $u$ itself is constant along each
component of $\partial \Omega$ (with possibly distinct constants on 
different boundary components).

Domains that admit roof functions are called \emph{quasi-exceptional domains},
and when the roof function is further assumed to have constant Dirichlet data (not just piecewise constant), 
they are referred to as \emph{exceptional domains}.
It was shown in \cite{EL} that quasi-exceptional domains are arclength NQDs, i.e., we have
\begin{thm}\label{thm:ELforward}
If $\Omega$ admits a roof function
then $\Omega$ is an arclength NQD.
\end{thm}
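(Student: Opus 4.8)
The plan is to transform the arclength integral in \eqref{eq:ALNQD} into a contour integral of an analytic function, using the roof condition to rewrite the arclength element $ds$ in terms of $dz$ and the complex gradient of $u$, and then to invoke Cauchy's theorem for the Smirnov class $E^1(\Omega)$. The first step is to introduce the \emph{complex gradient}
\[
\phi := u_x - i u_y = 2\,\partial u/\partial z .
\]
Since $u$ is harmonic, $\phi$ is analytic in $\Omega$, and it is crucial to note that $\phi$ is single-valued even when $\Omega$ is multiply connected: although a harmonic conjugate of $u$ may acquire periods around the bounded complementary components, the derivative $\phi$ itself has none. Thus $\phi$ is a genuine analytic function on $\Omega$, with $|\phi| = |\nabla u|$.

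Next I would encode the roof condition. Parametrizing a boundary component by arclength with $\Omega$ on the left, the unit tangent is $\dot z$ and the inward unit normal, viewed as a complex number, is $i\dot z$; meanwhile the gradient corresponds to $u_x + i u_y = \overline{\phi}$. Hence the roof condition $\nabla u = \nu$ along $\partial\Omega$ reads $\overline{\phi} = i\dot z$, equivalently $\overline{\dot z} = i\phi$. Because $|\dot z| = 1$ we have $ds = \overline{\dot z}\,dz$ on $\partial\Omega$, and substituting gives the pointwise identity $ds = i\,\phi(z)\,dz$ (one checks $i\phi\,\dot z = \overline{\dot z}\,\dot z = 1$, so the right-hand side is indeed the real, positive arclength element). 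Therefore, for every $g \in E^1(\Omega)$,
\[
\int_{\partial\Omega} g(z)\,ds(z) = i\int_{\partial\Omega} g(z)\,\phi(z)\,dz .
\]
The integrand on the right is now a product of analytic functions, and the goal becomes to show this contour integral vanishes, which is exactly the Smirnov-class version of Cauchy's theorem provided $g\phi \in E^1(\Omega)$.

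The main obstacle is precisely this last membership, i.e.\ showing that $\phi$ is a multiplier of $E^1(\Omega)$, which reduces to establishing that $\phi$ is a bounded analytic function on $\Omega$. On the boundary the roof condition forces $|\phi| = |\nabla u| = 1$, so a maximum-modulus argument would yield $|\phi| \le 1$ throughout $\Omega$; the difficulty is that the relevant domains are typically unbounded (a halfplane, the exterior of a conic, and the multiply connected examples of \cite{EL}), so one must first control the behavior of $\phi$ near infinity before the maximum principle applies. This is where the finite connectivity and the boundary smoothness hypothesis enter, and where I expect the genuine work to lie; in the known examples $\phi$ extends to a finite limit at infinity, and the positivity and harmonicity of $u$ should constrain its growth enough to secure boundedness. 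Once $\phi \in E^\infty(\Omega)$ is in hand, $g\phi \in E^1(\Omega)$, its contour integral vanishes, and the displayed identity shows $\int_{\partial\Omega} g\,ds = 0$, completing the argument.
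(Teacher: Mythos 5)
Your reduction coincides step for step with the paper's own argument: your complex gradient $\phi = u_x - iu_y$ is exactly the derivative $f'$ of the analytic completion of $u$ used there, the single-valuedness observation is the same, your identity $ds = i\phi\,dz$ is the paper's $ds = if'(z)\,dz$, and both arguments then reduce the quadrature identity \eqref{eq:ALNQD} to Cauchy's theorem in $E^1(\Omega)$, modulo the multiplier property, i.e.\ showing $\phi \in H^\infty(\Omega)$ so that $g\phi \in E^1(\Omega)$.

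The genuine gap is that this last membership is not a technical afterthought but the substantive content of the theorem, and your proposal leaves it as a hope rather than a proof. As you yourself note, a maximum-modulus argument from $|\phi|=1$ on $\partial\Omega$ cannot get started on an unbounded domain, and neither can any Phragm\'en--Lindel\"of-type argument, without an a priori bound on growth at infinity --- which is precisely what is missing; harmonicity and the boundary condition alone do not supply it. The hypothesis that does the work is the positivity of $u$ (part of the definition of a roof function), which your sketch mentions only in passing and never uses: in \cite{EL} the positivity is combined with a growth estimate in order to apply \cite[Thm. II]{Kjellberg}, a quantitative relative of the Denjoy--Carleman--Ahlfors theorem, and it is this potential-theoretic input that yields $f' \in H^\infty(\Omega)$. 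Without it, your argument establishes only the formal equivalence between the roof condition and the vanishing of the contour integrals $\int_{\partial\Omega} g\phi\,dz$, not the theorem itself. A minor additional slip: you appeal to ``the finite connectivity and the boundary smoothness hypothesis,'' but Theorem \ref{thm:ELforward} carries no such hypotheses; those belong to the converse result, Theorem \ref{thm:main}.
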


Let us sketch the proof of this result to provide some context for what follows.
Suppose $u$ is a roof function for $\Omega$.
Then notice that the analytic completion $f$
of $u$ has a single-valued derivative.
Namely, $f'$ is just the complex conjugate of $\nabla u$.
Thus, $i f'(z)$ provides an analytic continuation 
of the conjugate of the unit tangent vector throughout all of $\Omega$.
Using the relation between the arclength element $ds(z)$ and the unit tangent vector, we have
$$ ds(z) = i f'(z) dz .$$
This allows us to restate the condition \eqref{eq:ALNQD} as requiring, for all $g \in E^1(\Omega)$,
$$\int_{\partial \Omega} g(z) f'(z) dz = 0.$$
That the above integral vanishes is a consequence of Cauchy's theorem,
and we conclude that $\Omega$ is an  arclength NQD.
However, in order to make this argument rigorous, one must verify that
$g \cdot f'$ is in the test class $E^1(\Omega)$, defined below.
In fact, one can show $f' \in H^\infty(\Omega)$ (we recall the definition of $H^\infty$ below)
using potential theoretic estimates 
on $u$ (this step relies on the positivity of $u$, see \cite{EL}), and this implies $g \cdot f' \in E^1(\Omega)$. 

Substantial progress has been made classifying exceptional domains and quasi-exceptional domains
\cite{HHP, KLT, MT1, MT2, EL}.
These results rely on the existence of a roof function, 
raising the following question that was posed in \cite{EL} (cf. \cite{Lopen})
asking whether the converse to Theorem \ref{thm:ELforward} holds.

\begin{q}\label{q:main}
Does every arclength NQD admit a roof function?
\end{q}

Under the assumption that $\Omega$ has finitely many boundary components,
we give an affirmative answer to this question in the next section (see Theorem \ref{thm:main}), thus showing that the above-mentioned classification results for quasi-exceptional domains represent definitive progress on the classification problem for arclength NQDs.

The proof of Theorem \ref{thm:main}
has two key ingredients.
The first is a characterization of the Smirnov space of analytic functions, a result of Havinson and Tumarkin
stated as Theorem \ref{thm:HaTuSm} below,
that allows reversing the main step in the proof of Theorem \ref{thm:ELforward} by establishing that the vanishing of integrals stated in the arclength NQD condition \eqref{eq:ALNQD} guarantees the analytic continuation of the tangent vector to all of $\Omega$.
This leads to a candidate roof function, but showing positivity requires a second key idea (in this instance potential theoretic) from \cite[Thm. II]{Kjellberg} which is based on the proof of the Denjoy-Carleman-Ahlfors theorem.
In order to highlight the utility of this method, we point out an interesting comparison with \cite{EL}:
the assumed positivity of the roof function
was used along with a growth estimate in \cite{EL} in order to apply \cite[Thm. II]{Kjellberg}, whereas
here we will use an adaptation of \cite[proof of Thm. II]{Kjellberg} in order to show the positivity
of a candidate roof function, see Lemma \ref{lemma:pos} below.


\noindent {\bf Acknowledgments.}
We are grateful to the anonymous referees whose careful reading of the paper helped clarify, and in some places correct, the exposition.

\section{Existence of a roof function for arclength NQDs}\label{sec:roof}





First, we recall the definitions of the
Hardy spaces $H^p(D)$ and the Smirnov spaces $E^p(D)$.
A function $g$ analytic in $D$ is said to belong to $E^p(D)$ if there exists a sequence of cycles $\gamma_k$
homologous to zero, rectifiable, and converging to the boundary $\partial D$ 
(in the sense that $\gamma_k$ eventually surrounds each compact sub-domain of $D$),
such that:
$$\sup_{\gamma_k} \int_{\gamma_k} |g(z)|^p |dz| < \infty.$$
  
On the other hand, a function $g$ analytic in $D$ is said to belong to $H^p(D)$ if the function $|f|^p$ admits a harmonic majorant in $D$. 
Basic properties of these spaces can be found in \cite{Duren}, \cite{Fisher}, \cite{HaTu1960}.

We recall a key result
from the theory of Smirnov spaces (see \cite[Ch. 10]{Duren} for a more detailed overview).  The following result
due to Havinson and Tumarkin \cite{HaTu1960}
provides an extension (to the multiply-connected setting) 
of a result of Smirnov \cite[Thm. 10.4]{Duren}.

\begin{thm}[Havinson, Tumarkin]\label{thm:HaTuSm}
Let $D$ be a finitely-connected domain with rectifiable boundary.
Suppose $g \in L^1(\partial D)$,
and the function $h$ defined by
$$ h(w) := \int_{\partial D} \frac{ g(\zeta)}{\zeta - w} d\zeta$$ 
vanishes for all 
$ w \in \CC \setminus \overline{D}$.
Then $h \in E^1(D)$ and
has boundary values $g$
almost everywhere on $\partial D$.
\end{thm}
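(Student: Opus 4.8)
The plan is to use that $h$ is a Cauchy-type integral, hence analytic in $\CC\setminus\partial D$, and to read off its boundary behaviour from two classical inputs: the Sokhotski--Plemelj jump relations, and the simply-connected characterization of the Smirnov class due to Smirnov (\cite[Thm.~10.4]{Duren}), which I would upgrade to finite connectivity through a conformal model. Throughout I regard $h$ as two analytic objects, its restriction $h_+$ to $D$ and its restriction $h_-$ to $\CC\setminus\ol{D}$, the latter being $\equiv 0$ by hypothesis. For the core case, suppose first that $D$ is a Jordan domain and let $\phi\colon\DD\to D$ be a Riemann map; since $\partial D$ is rectifiable one has $\phi'\in H^1(\DD)$, and pulling the arclength integral back to the circle replaces $g$ by $G:=(g\circ\phi)\,\phi'\in L^1(\partial\DD)$. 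The conformal change of variables transports $h$ to the disk Cauchy integral $\mathcal CG(\tau)=\frac1{2\pi i}\int_{\partial\DD}\frac{G(\sigma)}{\sigma-\tau}\,d\sigma$ (the constant being immaterial) and carries the hypothesis ``$h\equiv0$ off $\ol{D}$'' to ``$\mathcal CG\equiv0$ for $|\tau|>1$.'' On the disk this is transparent in Fourier terms: the values of $\mathcal CG$ for $|\tau|>1$ encode precisely the negative Fourier coefficients of $G$, so their vanishing means $\hat G(m)=0$ for all $m<0$; hence the Poisson extension of $G$ is analytic, and since $\|(\mathcal CG)_r\|_{L^1}\le\|G\|_{L^1}$ it lies in $H^1(\DD)$ with boundary values $G$. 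Translating back through $\phi$ via the definition of the Smirnov class gives $h\in E^1(D)$ with nontangential boundary values $g$.

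For the finitely-connected case I would pass to a canonical model: by Koebe's theorem there is a conformal map $\Phi\colon D\to D^{\ast}$ onto a circular domain $D^{\ast}$ bounded by finitely many circles $C_0,\dots,C_n$, with $\Phi'$ enjoying the analogue of the $H^1$-property used above, and transplanting $h$ and $g$ again converts the hypothesis into the vanishing of the transplanted Cauchy integral on every complementary component of $D^{\ast}$. Because the boundary now consists of genuine circles, I can localize near each $C_j$ and rerun the Fourier argument of the previous paragraph: the vanishing of $h$ on the component of $\CC\setminus\ol{D^{\ast}}$ abutting $C_j$ forces the corresponding one-sided coefficients to vanish, producing an $H^1$-type bound near that circle, and assembling the local estimates along an exhaustion $\gamma_k\to\partial D^{\ast}$ yields $\sup_k\int_{\gamma_k}|h|\,|dz|<\infty$, i.e. $h\in E^1(D)$. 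Simultaneously, the local jump relation across each $C_j$ (only the piece of the Cauchy integral carried by $C_j$ jumps, the rest being analytic across it) combined with the vanishing of $h_-$ there identifies the boundary values as $g$.

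The crux is the $L^1$ hypothesis, as opposed to $L^p$ with $p>1$: for a general $L^1$ density the Cauchy transform need not lie in $E^1$ at all, since its Riesz projection can fail to be integrable, so one cannot invoke $L^p$-boundedness of the singular Cauchy operator nor assert a priori existence of nontangential boundary values. The vanishing-off-$\ol{D}$ assumption is exactly the mechanism that repairs this, since on the model it kills the negative Fourier coefficients and thereby promotes a badly behaved $L^1$ Cauchy integral to an honest $H^1$ function; this is the essential content. I expect the two genuinely delicate points to be (i) making the conformal transplant of the Cauchy kernel rigorous, i.e. controlling the factor $\frac{\sigma-\tau}{\phi(\sigma)-\phi(\tau)}$ well enough that both the vanishing condition and the boundary-value identification survive the change of variables, and (ii) the multiply-connected bookkeeping --- establishing the derivative regularity of the map to the circular model and verifying that the localized Fourier vanishing near each circle assembles correctly into the global Smirnov growth bound.
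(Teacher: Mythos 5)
The paper offers no proof of this statement at all: it is quoted from Havinson and Tumarkin \cite{HaTu1960} as the multiply-connected extension of Smirnov's theorem \cite[Thm.~10.4]{Duren}, so your proposal has to be judged against the classical argument. Your disk case is correct and is indeed the standard core of that argument: vanishing of $\mathcal{C}G$ on $|\tau|>1$ is equivalent to the vanishing of the negative Fourier coefficients of $G$, whence $\mathcal{C}G$ coincides with the Poisson integral of $G$, lies in $H^1(\DD)$, and has nontangential boundary values $G$.

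The genuine gap is your transport step, and it is structural, not a technicality to be "controlled." The Cauchy kernel is not conformally invariant: with $G=(g\circ\phi)\phi'$ one has
\begin{equation*}
(h\circ\phi)(\tau)\,\phi'(\tau)\;-\;\int_{\partial\DD}\frac{G(\sigma)}{\sigma-\tau}\,d\sigma
\;=\;\int_{\partial\DD}G(\sigma)\,k_\tau(\sigma)\,d\sigma,
\qquad
k_\tau(\sigma)=\frac{\phi'(\tau)}{\phi(\sigma)-\phi(\tau)}-\frac{1}{\sigma-\tau},
\end{equation*}
where $k_\tau$ is analytic in $\DD$ and continuous on $\ol{\DD}$ but not zero; the discrepancy term has no reason to vanish a priori, and its vanishing is essentially equivalent to the orthogonality relations you are trying to prove, so asserting that the change of variables "transports $h$ to $\mathcal{C}G$" is circular. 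Worse, $\phi$ maps nothing onto $\{|\tau|>1\}$, so the hypothesis ``$h=0$ off $\ol{D}$'' cannot be carried to ``$\mathcal{C}G=0$ off $\ol{\DD}$'' by any change of variables whatsoever. The classical proof goes around this: (1) expand the Cauchy transform at points of the complementary components to convert the hypothesis into moment conditions $\int_{\partial D}g(\zeta)R(\zeta)\,d\zeta=0$ for all rational $R$ with poles off $\ol{D}$; (2) transplant \emph{these relations}, not the Cauchy integral, to $\int_{\partial\DD}G(\sigma)\sigma^n\,d\sigma=0$ using the Carath\'eodory extension of $\phi^{-1}$ together with Walsh/Mergelyan rational approximation on $\ol{D}$ (this is exactly where finite connectivity of the complement enters); (3) run your Fourier argument to produce $F\in H^1$ with boundary values $G$; (4) pull $F$ back through the Keldysh--Lavrentiev/Havinson--Tumarkin correspondence $f\in E^1(D)\iff(f\circ\phi)\phi'\in H^1$ to obtain some $\tilde h\in E^1(D)$ with boundary values $g$; and (5) --- a second step missing from your outline --- identify $\tilde h$ with the original $h$ via the Cauchy representation theorem for $E^1$ functions (every $E^1$ function is the Cauchy integral of its boundary values, with the transform vanishing off $\ol{D}$). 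Without the moment/approximation mechanism in (1)--(2) and the identification in (5), the proof does not close. Your circular-domain localization is the right idea for the model domain, except that the one-sided coefficients do not \emph{vanish}: the piece of the Cauchy integral carried by $C_j$ is cancelled on the complementary disk by the remaining pieces, which are analytic across $C_j$; it is this analytic continuation that makes the other one-sided part an $L^1$ density with an honest Hardy-space extension.
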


We now state our result addressing Question \ref{q:main}.

\begin{thm}\label{thm:main}
Suppose $\Omega$ is an arclength NQD and that the boundary $\partial \Omega$ consists of finitely many smooth curves.
Then $\Omega$ admits a roof function.
\end{thm}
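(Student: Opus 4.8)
The plan is to reverse the argument sketched for Theorem~\ref{thm:ELforward}. Writing $\tau(z) = dz/ds$ for the unit tangent along $\partial\Omega$, the relation $|\tau|=1$ gives $ds = \overline{\tau}\,dz$ on the boundary, so the arclength NQD condition \eqref{eq:ALNQD} is equivalent to $\int_{\partial\Omega} g(z)\,\overline{\tau(z)}\,dz = 0$ for every $g\in E^1(\Omega)$. My goal is to produce an analytic function $\Phi$ on $\Omega$ whose boundary trace equals $\overline{\tau}$: then $f' := -i\Phi$ is the sought analytic continuation of the conjugate tangent, and integrating it yields a candidate roof function exactly as in the forward direction.

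First I would invoke Theorem~\ref{thm:HaTuSm}. Consider the Cauchy transform $h(w) = \int_{\partial\Omega}\frac{\overline{\tau(\zeta)}}{\zeta - w}\,d\zeta$ for $w\in\CC\setminus\overline\Omega$. For such $w$ the kernel $1/(\cdot - w)$ is analytic on $\Omega$ (for unbounded ends one takes differences $\frac{1}{\cdot-w_1}-\frac{1}{\cdot-w_2}$, which decay like $\zeta^{-2}$ and so carry the requisite integrability), and feeding these test functions into the reformulated condition forces $h$ to vanish, or at least to be locally constant and then, after examining the behavior at infinity, to vanish, throughout $\CC\setminus\overline\Omega$. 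By Theorem~\ref{thm:HaTuSm} this gives $\Phi := h \in E^1(\Omega)$ with boundary values $\overline{\tau}$ almost everywhere. Care is required with the hypothesis $g\in L^1(\partial\Omega)$ and with the point at infinity when $\partial\Omega$ has unbounded components, and this bookkeeping is where I expect the first-ingredient technicalities to concentrate.

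Granting $\Phi$, I would set $f' = -i\Phi$ and define $u(z) = \operatorname{Re}\int^z f'(\zeta)\,d\zeta$. Single-valuedness of $u$ needs the periods of $f'\,dz$ to be purely imaginary; but around each bounded boundary component $\Gamma_j$ one has $\oint_{\Gamma_j} f'\,dz = -i\oint_{\Gamma_j}\overline{\tau}\,dz = -i\oint_{\Gamma_j} ds = -i\,L_j$, a purely imaginary number, so $\operatorname{Re}\oint_{\Gamma_j} f'\,dz = 0$ and $u$ is a well-defined harmonic function. Since $\nabla u = \overline{f'}$, the boundary identity $f' = -i\overline{\tau}$ gives $\nabla u = i\tau$, which is the inward unit normal for the appropriate orientation; in particular $|\nabla u| = 1$, the tangential derivative $\nabla u\cdot\tau$ vanishes, and $u$ is constant on each component of $\partial\Omega$. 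Thus $u$ meets every requirement of a roof function except positivity.

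The hard part will be positivity, and this is where I would adapt \cite[Thm.~II]{Kjellberg}. Because $u$ is determined only up to an additive constant and the minimum principle upgrades $u\ge 0$ to $u>0$ in the open domain, it suffices to show $u$ is bounded below; as $\Omega$ is necessarily unbounded (a bounded arclength NQD would contradict \eqref{eq:ALNQD} with $g\equiv 1$) and $u$ is continuous with finite boundary data, the only danger is $u\to-\infty$ along one of the finitely many ends. Unlike \cite{EL}, I cannot assume positivity in order to bound $f'$, so I would first obtain the growth bound $|\Phi|\le 1$ in $\Omega$ from a maximum principle for the Smirnov class together with a Phragm\'en--Lindel\"of argument at infinity, giving $|\nabla u|\le 1$ and hence at most linear growth of $u$. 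Then, adapting the Denjoy--Carleman--Ahlfors scheme underlying \cite[Thm.~II]{Kjellberg}, I would argue that a descent of $u$ to $-\infty$ through the finitely many ends is incompatible with this growth restriction, deriving a contradiction. This would yield the positivity asserted in Lemma~\ref{lemma:pos}, and normalizing the additive constant then completes the construction of the roof function.
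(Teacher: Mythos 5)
Your skeleton follows the paper's strategy (reverse the proof of Theorem \ref{thm:ELforward} via Theorem \ref{thm:HaTuSm} to build a candidate $u$, then a Kjellberg/Denjoy--Carleman--Ahlfors argument for positivity), and the middle portion --- single-valuedness of $u$ from the purely imaginary periods $-iL_j$, and linear growth from $|\nabla u|=O(1)$ --- matches the paper and is correct. But both of your ``key ingredient'' steps have genuine gaps. First, you cannot apply Theorem \ref{thm:HaTuSm} directly on $\Omega$: the domain is necessarily unbounded and (except in the exterior-of-a-disk case) has unbounded boundary components, so $\partial\Omega$ has infinite length, the unimodular function $\overline{\tau}$ is \emph{not} in $L^1(\partial\Omega)$, the Cauchy transform $\int_{\partial\Omega}\overline{\tau(\zeta)}(\zeta-w)^{-1}\,d\zeta$ is a divergent integral, and the kernels $(\zeta-w)^{-1}$ are not in $E^1(\Omega)$. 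Testing with differences of kernels does not rescue this, since it only produces a well-defined two-point quantity while the object $h$ you want to feed into Theorem \ref{thm:HaTuSm} never exists; the theorem's hypotheses ($L^1$ density, finite-length boundary, plain Cauchy transform) simply fail. This is not ``bookkeeping'': it is precisely why the paper routes everything through a conformal map $\phi\colon K\to\Omega$ onto a \emph{bounded} circular domain, using the Keldysh--Lavrentiev/Havinson--Tumarkin equivalence $g\in E^1(\Omega)\iff g(\phi)\phi'\in E^1(K)=H^1(K)$ to transplant the null-quadrature condition to $\partial K$, where the boundary has finite length, $\overline{T\circ\phi}\in L^\infty\subset L^1(\partial K)$, and Theorem \ref{thm:HaTuSm} legitimately applies; boundedness of the extension then comes for free from $H^1(K)\subset N^+$ plus bounded boundary values (rather than your unspecified ``Phragm\'en--Lindel\"of argument at infinity'' on the unbounded domain).

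The second gap is more serious. Your reduction of positivity to ``$u$ bounded below'' is fine, but your proposed contradiction --- that descent of $u$ to $-\infty$ along an end is incompatible with linear growth via a Denjoy--Carleman--Ahlfors argument --- is false as stated: the function $u=-y$ in the upper half-plane is harmonic, has nonnegative (zero) boundary values, grows linearly, and tends to $-\infty$ in a single tract. Quantitatively, the length--area estimate with $n$ disjoint tracts yields $\log M(r)\geq (n/2)\log r + C$, which contradicts $M(r)=O(r)$ only when $n\geq 3$; one tract of negativity (or even two) gives no contradiction. The missing idea is that the full gradient boundary condition must enter the positivity argument: the paper uses the fact that $\nabla u$ is the \emph{inward} unit normal (so $u$ exceeds its maximal boundary value $m$ immediately inside the unbounded boundary component $L$) to produce a region $\Omega_m$ abutting $L$, cuts it by an arc into two pieces $\Omega_a,\Omega_b$, and then proves --- via a conformal map onto a circular domain, reflection, and $|\nabla u|=1$ on $L$ --- the Claim that $u\to+\infty$ along a path to infinity in each piece. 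Only these two additional regions of large \emph{positive} values, together with the negativity region $R_1$, give the three tracts needed for the contradiction $\log M(r)\geq(3/2)\log r + C$. Without this construction (or the alternative endgame via Heins's theorem mentioned in the paper's closing remark), your positivity step has no proof.
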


\begin{remark}
Note that $\Omega$ is necessarily unbounded,
since otherwise the constant functions are in the test class $E^1(\Omega)$ and fail to satisfy the null quadrature condition.
Also, $\Omega$ may have boundary components that are unbounded.
\end{remark}

In the proof of Theorem \ref{thm:main}, the following important lemma will be used to construct a candidate roof function. 

\begin{lemma}\label{lemma:tangvec}
Let $\Omega$ be as in Theorem \ref{thm:main}. The unit tangent vector $T(z)$ of $\partial \Omega$ admits an extension throughout $\Omega$ to an analytic function $h(z) \in H^\infty(\Omega) \cap C(\ol{\Omega})$.
\end{lemma}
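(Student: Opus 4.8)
The plan is to run the argument for Theorem~\ref{thm:ELforward} in reverse, substituting Theorem~\ref{thm:HaTuSm} for Cauchy's theorem. Along $\partial\Omega$ the arclength element and the unit tangent are related by $ds(z)=\ol{T(z)}\,dz$, so the defining identity \eqref{eq:ALNQD} can be rewritten as
$$\int_{\partial\Omega} g(z)\,\ol{T(z)}\,dz=0\qquad\text{for all } g\in E^1(\Omega).$$
Thus $\ol{T}$ (the boundary quantity $ds/dz$, playing the role of $if'$ in the forward direction) is ``orthogonal'' to the test class, and the goal is to realize it as the nontangential boundary trace of a bounded analytic function $h$ on $\Omega$; its boundary values furnish the asserted analytic continuation of the tangent.

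First I would introduce the Cauchy transform
$$h(w):=\int_{\partial\Omega}\frac{\ol{T(\zeta)}}{\zeta-w}\,d\zeta=\int_{\partial\Omega}\frac{ds(\zeta)}{\zeta-w},$$
and try to show it vanishes for $w\in\CC\setminus\ol{\Omega}$ by feeding the kernel $\zeta\mapsto 1/(\zeta-w)$ into the rewritten condition. Here is the first difficulty: since $\Omega$ and some of its boundary components may be unbounded, $1/(\zeta-w)$ need not lie in $E^1(\Omega)$ and the integral above need not converge absolutely. I would instead test against kernels with extra decay at infinity --- for instance $1/(\zeta-w)^2$, or differences $1/(\zeta-w_1)-1/(\zeta-w_2)$, which decay like $|\zeta|^{-2}$ and genuinely lie in $E^1(\Omega)$ --- deducing that the increments (equivalently the derivative) of the transform vanish off $\ol{\Omega}$, so that $h$ is constant on each complementary component. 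The constant on the unbounded complementary component is forced to be $0$ by decay at $\infty$; pinning the constants on the bounded components (holes) to $0$, together with making the transform itself convergent, is where I expect the most care to be required.

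Granting that the transform vanishes on $\CC\setminus\ol{\Omega}$, Theorem~\ref{thm:HaTuSm} (applied after a M\"obius reduction $z\mapsto 1/(z-z_0)$ with $z_0$ in a complementary component, to restore the finite-length hypothesis) yields $h\in E^1(\Omega)$ with nontangential boundary values equal to $\ol{T}$ almost everywhere. To upgrade $E^1$ to $H^\infty$ I would invoke $|\ol{T}|=1$ a.e.\ on $\partial\Omega$: since $h$ lies in the Smirnov class, the maximum principle for Smirnov-class functions (see \cite{Duren}) gives $|h|\le 1$ on $\Omega$, so $h\in H^\infty(\Omega)$. Continuity up to the closure then follows from the smoothness of $\partial\Omega$: the tangent is H\"older continuous on each boundary arc, and the Plemelj--Privalov theory for Cauchy integrals with H\"older density on smooth curves provides a continuous extension to $\ol{\Omega}$, whence $h\in C(\ol{\Omega})$.

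I expect the principal obstacle to be the unboundedness of $\Omega$ and of its boundary components, which simultaneously invalidates the naive test functions (their membership in $E^1$) and the finite-length hypothesis of Theorem~\ref{thm:HaTuSm}. Controlling the behavior at infinity --- both the decay needed to make the Cauchy-transform step rigorous and the regularity at the point where the M\"obius image of $\infty$ sits, where several boundary arcs may accumulate --- is the delicate part; and verifying that the trace of $h$ is genuinely $\ol{T}$ (not merely a function agreeing with it in the a.e.\ sense of Theorem~\ref{thm:HaTuSm}) again leans on the assumed smoothness of $\partial\Omega$. It is worth noting that positivity of the eventual roof function plays no role at this stage and is deferred to Lemma~\ref{lemma:pos}.
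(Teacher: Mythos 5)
Your high-level strategy --- rewrite the NQD condition as orthogonality of $\ol{T}$ to the test class, realize $\ol{T}$ as the boundary trace of an analytic function via a Cauchy transform and Theorem \ref{thm:HaTuSm}, then upgrade to $H^\infty$ using $|\ol{T}|=1$ and the Smirnov maximum principle --- is exactly the paper's strategy. But the difficulties you flag as ``where I expect the most care to be required'' are not peripheral; they are the crux, and your proposed workarounds do not close them. Since $|\ol{T}|=1$, the integrand of $\int_{\partial\Omega}\ol{T(\zeta)}(\zeta-w)^{-1}\,d\zeta$ has modulus exactly $ds/|\zeta-w|$, so the transform diverges absolutely whenever $\partial\Omega$ has infinite length --- i.e., precisely in the interesting case of unbounded boundary components. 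Your substitutes fare no better: membership of $1/(\zeta-w)^2$ in $E^1(\Omega)$, and finiteness of the length of the M\"obius image of $\partial\Omega$, both require $\int_{\partial\Omega} ds/|\zeta|^2<\infty$, which is not known a priori (a smooth unbounded curve can have length growing quadratically or faster in $B_R$, and nothing yet rules this out for an NQD). Even granting that, knowing only that the \emph{increments} of the never-defined transform vanish off $\ol{\Omega}$ leaves a function determined up to a constant on each complementary component, with no evident decay to pin the constants down (your difference function $H(w)=\int[\,(\zeta-w)^{-1}-(\zeta-w_0)^{-1}]\ol{T}\,d\zeta$ does not obviously tend to $0$ at infinity, since each term separately diverges); and Theorem \ref{thm:HaTuSm} requires the vanishing of the transform itself, not of its differences.

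The paper removes all of these obstructions at once with the ingredient your proposal lacks: conformal transplantation to a \emph{bounded circular domain}. Taking $\phi:K\to\Omega$ conformal with $K$ a bounded circular domain, the Havinson--Tumarkin extension \cite{HaTuDef} of the Keldysh--Lavrentiev theorem \cite{KeLa} gives that $g\in E^1(\Omega)$ if and only if $(g\circ\phi)\phi'\in E^1(K)$, so the NQD condition transfers to $\int_{\partial K}G(w)\ol{T(\phi(w))}\,dw=0$ for \emph{all} $G\in E^1(K)$. On the bounded domain $K$ every kernel $1/(\xi-w)$ with $w\notin\ol{K}$ is a legitimate test function, the Cauchy transform converges absolutely, and Theorem \ref{thm:HaTuSm} applies verbatim. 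Moreover $E^1(K)=H^1(K)$ because the boundary circles are real-analytic, which is what legitimizes the Smirnov maximum principle step; applying that principle directly on $\Omega$ (or on a M\"obius image whose boundary is singular at the image of $\infty$) would require knowing the domain is a Smirnov domain, a genuinely delicate point given the non-Smirnov pathologies cited in the introduction. Finally, the paper gets continuity of $h$ on $\ol{\Omega}$ from the Poisson representation of the $H^\infty$ function $\kappa=h\circ\phi$ together with the boundary behaviour of $\phi$, which needs only \emph{continuity} of $T$, whereas your Plemelj--Privalov step needs a H\"older modulus that the hypothesis ``smooth'' is not invoked to supply, and would in any case be applied on an unbounded curve. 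Without the transplantation --- or a genuine replacement for it --- your argument does not get off the ground for unbounded $\Omega$.
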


\begin{proof}[Proof of Lemma \ref{lemma:tangvec}]

From the arclength null quadrature condition we have, for an arbitrary function 
$g \in E^1(\Omega)$,
$$\int_{\partial \Omega} g(z) ds = 0, $$
where $ds$ denotes the arclength element.
Write
$$\int_{\partial \Omega} g(z) ds = \int_{\partial \Omega} g(z) \overline{T(z)} dz ,$$
where $T(z)$ denotes the unit tangent vector to $\partial \Omega$.

Let $\phi:K \rightarrow \Omega$
be a conformal mapping from a bounded circular domain $K$ to $\Omega$.
Recall that a circular domain is a finitely-connected domain whose boundary components are all circles,
and also recall that each finitely-connected domain is conformally equivalent to a circular domain \cite[Ch. 3]{Goluzin}.
Then we have for each $g \in E^1(\Omega)$
$$\int_{\partial K} g(\phi(w)) \phi'(w) \overline{T(\phi(w))} dw = 0.$$

By Havinson and Tumarkin's extension \cite{HaTuDef} 
(to the multiply-connected setting) of a result of Keldysh and Lavrentiev \cite{KeLa}
we have that $g \in E^1(\Omega)$
is equivalent to $g(\phi(w)) \phi'(w) \in H^1(K) = E^1(K)$,
and in particular the functions $g(\phi(w)) \phi'(w)$ generate all of $E^1(K)$.
Hence, 
$$\int_{\partial K} G(w) \overline{T(\phi(w))} dw = 0, \quad \text{for all } G \in E^1(K).$$

This implies
that the function $\kappa$ defined by
$$\kappa(w) := \int_{\partial K} \frac{ \overline{T(\phi(\xi))}}{\xi - w} d\xi$$
vanishes for all 
$w \in \CC \setminus \overline{K}$.
By Theorem \ref{thm:HaTuSm} we have
that $\kappa \in E^1(K)$ and
has boundary values $\overline{T(\phi(w))}$
almost everywhere on $\partial K$.
Since the boundary components of $K$ are real-analytic (they are circles), we have $E^1(K) = H^1(K)$ \cite[p. 182]{Duren}.
Since $H^1 \subset N^+$, the Smirnov class, we conclude \cite[Thm. 2.11]{Duren} that $\kappa \in H^\infty$ since it has boundary values in $L^\infty(\partial K)$.

Let $\psi$ denote the inverse of $\phi$ and define $h(z) = \kappa(\psi(z))$.
Then $h \in H^\infty(\Omega)$ and
has boundary values $\overline{T(z)}$
almost everywhere on $\partial \Omega$.

Moreover, the function $\kappa$ is continuous up to the boundary $\partial K$ except possibly at the finitely many points that are mapped by $\phi$ to an infinite prime end of $\Omega$.
Indeed, being in $H^\infty$ the function $\kappa$ is representable by a Poisson integral of its boundary values $\ol{T\circ \phi}$, and as such it is continuous up to the boundary at all points of continuity of the boundary function $\ol{T\circ \phi}$. We have that $T$ is continuous by the smoothness assumption on $\partial \Omega$, and $\phi$ is continuous, except at the preimages of infinite prime ends, by the boundary behaviour of conformal mappings with Jordan boundary \cite[Ch. 2]{Pom}. This verifies the desired continuity of $\kappa$, and it then follows that $h$ is also continuous up to the boundary, and coincides with $\ol{T(z)}$ on $\partial \Omega$ everywhere, and not just almost everywhere.
\end{proof}

\begin{proof}[Proof of Theorem \ref{thm:main}]
As a candidate for the roof function
we take 
\be\label{eq:u}
u(z) = \Re \{ f(z) \} + C,
\ee
where
\begin{equation}
f(z) =  -i \int_{z_0}^{z} h(\zeta) d\zeta ,
\end{equation}
where $z_0 \in \Omega$ is fixed, and $C$ is an appropriate constant to be specified below.
From the continuity up to the boundary of $h$ we have $u \in C^1(\ol{\Omega})$, and we verify from the boundary values of $h$ that $\nabla u = \ol{f'(z)} = i\ol{h(z)}$ coincides with the inward-pointing unit normal vector.
Indeed, $h$ has boundary values $\ol{T(z)}$,
so that $\nabla u$ has boundary values $i T(z)$,
which is the unit normal vector.
Furthermore, we notice that $u$ is single-valued, 
and in fact, locally a constant on  $\partial \Omega$, 
since the integral $\int_{\gamma} h(\zeta) d\zeta = \int_{\gamma} \overline{T(\zeta)} d\zeta = \int_{\gamma} \overline{T(\zeta)} T(\zeta) ds $
is purely real for each subarc $\gamma$ of $\partial \Omega$.
The constant $C$ is chosen to ensure non-negativity of the piecewise-constant boundary values of $u$.

The following growth estimate will be key to showing the positivity of $u$.

\begin{lemma}\label{lemma:growth}
The function $u$ defined in \eqref{eq:u} satisfies the growth estimate
\begin{equation}\label{eq:linear}
 u(z) = O(|z|), \quad \text{as } z \rightarrow \infty.
\end{equation}
\end{lemma}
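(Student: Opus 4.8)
The key analytic fact I would isolate first is that $\nabla u$ is bounded by $1$ throughout $\Omega$. Recall from the proof of Lemma~\ref{lemma:tangvec} that $h$, being in $H^\infty(\Omega)$, is representable by the Poisson integral of its boundary values $\ol{T(z)}$. Since $|T(z)| = 1$ at every point of $\partial\Omega$, the Poisson representation (equivalently, the subharmonicity of $|h|$ together with the maximum principle) yields the pointwise bound $|h(z)| \le 1$ for all $z \in \Omega$. As $f'(z) = -i\,h(z)$, we obtain $|\nabla u(z)| = |\ol{f'(z)}| = |h(z)| \le 1$ on all of $\Omega$, and by continuity of $h$ up to $\ol{\Omega}$ this bound persists on the closure.

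With the gradient bound in hand, I would convert it to a growth bound by integrating along paths. Fixing the base point $z_0$, for any $z \in \Omega$ and any rectifiable arc $\gamma \subset \Omega$ joining $z_0$ to $z$ the fundamental theorem of calculus gives $|u(z) - u(z_0)| \le \int_\gamma |\nabla u|\,|d\zeta| \le \operatorname{length}(\gamma)$. Taking the infimum over $\gamma$ shows $|u(z)| \le |u(z_0)| + d_\Omega(z_0,z)$, where $d_\Omega$ denotes the intrinsic (geodesic) distance in $\Omega$. The estimate \eqref{eq:linear} is thus equivalent to the purely geometric statement that $d_\Omega(z_0, z) = O(|z|)$ as $z \to \infty$, i.e. that $\Omega$ is quasiconvex near infinity.

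This quasiconvexity at infinity is the heart of the matter, and it is where I expect the main difficulty to lie and where the hypothesis that $\partial\Omega$ consists of \emph{finitely many smooth curves} must be exploited. The approach I would take is to analyze the ends of $\Omega$ through the conformal map $\phi : K \to \Omega$ of Lemma~\ref{lemma:tangvec}. The points of $\partial K$ carried by $\phi$ to $\infty$ are finite in number, say $w_1, \dots, w_m$, and away from a neighborhood of these points $\phi$ stays bounded. Writing $w_0 = \psi(z_0)$ and using $|\kappa| \le 1$, one has $|f(\phi(w)) - f(\phi(w_0))| \le \int_{w_0}^{w} |\phi'|\,|dw|$, so the task reduces to showing, near each $w_j$, that $\int_{w_0}^{w} |\phi'|\,|dw| = O(|\phi(w)|)$ as $w \to w_j$; this then delivers $|u(z)| = O(|\phi(w)|) = O(|z|)$, and summing the finitely many contributions (with the compact part $\ol{\Omega} \cap \{|z| \le R_0\}$ handled trivially) completes the argument.

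The step I anticipate to be the genuine obstacle is precisely the local estimate $\int |\phi'|\,|dw| = O(|\phi|)$ at each infinite prime end. A bounded gradient alone does not force linear growth on an arbitrary domain — a boundary arc that approaches infinity through a long, spiraling channel would produce geodesic distance growing superlinearly — so the smoothness and finiteness of the boundary arcs must be used here to control the rate of blow-up of $\phi$ at the finitely many preimages $w_j$ and thereby rule out such pathological approaches to infinity. I would therefore focus the real work on establishing this asymptotic regularity of the conformal map at its infinite prime ends; once it is secured, the passage from the gradient bound to \eqref{eq:linear} is immediate from the two preceding paragraphs.
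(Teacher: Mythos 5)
Your first two paragraphs are correct: the bound $|\nabla u| = |h| \le 1$ follows from the Poisson representation (the paper only needs $|\nabla u| = O(1)$, which is immediate from $h \in H^\infty(\Omega)$), and integrating the gradient along a path gives $|u(z)| \le |u(z_0)| + d_\Omega(z_0,z)$. But from there the proposal has a genuine gap: you reduce the lemma to quasiconvexity of $\Omega$ at infinity, explicitly acknowledge that this is ``the genuine obstacle,'' and then do not prove it --- the proposal only sketches a plan for controlling $\int |\phi'|\,|dw|$ near the infinite prime ends, with no actual estimate. Moreover, the claimed ``equivalence'' with $d_\Omega(z_0,z) = O(|z|)$ is only one implication (quasiconvexity suffices for linear growth, not conversely), and it is far from clear that quasiconvexity can be established at this stage of the argument: a priori, an arclength NQD with finitely many smooth boundary curves could have an end that spirals, making the intrinsic distance grow superlinearly, and smoothness plus finiteness of the boundary arcs does not obviously exclude this.

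The point you missed is that the obstacle can be bypassed entirely, which is what the paper does. Instead of connecting $z$ to a fixed \emph{interior} base point by a path inside $\Omega$, take the straight segment from the origin to $z$ and let $\ell$ be the connected component of its intersection with $\Omega$ that contains $z$; its other endpoint $z^*$ is either the origin or a point of $\partial\Omega$. Integrating $\nabla u$ along $\ell$ gives $|u(z)| \le |z - z^*|\,\sup_\Omega |\nabla u| + |u(z^*)|$. The crucial fact --- which your intrinsic-distance reduction never exploits --- is that $u$ is constant on each of the finitely many boundary components, so $u(z^*)$ is either $u(0)$ or one of finitely many Dirichlet constants, hence bounded independently of $z$. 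One never needs to traverse $\Omega$ from a distant base point: if the radial segment exits $\Omega$, the estimate simply restarts from the boundary, where the value of $u$ is already known. This makes the lemma a one-line consequence of the gradient bound and the piecewise-constant boundary data, with no geometric hypothesis on the ends of $\Omega$ required.
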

\begin{proof}[Proof of Lemma \ref{lemma:growth}]

Since $h \in H^\infty(\Omega)$
we have $|\nabla u| = O(1)$
as $z \rightarrow \infty$.

Consider the line segment running from the origin to $z$, and let $\ell$ be the connected component of this line segment that contains $z$.  Hence, $\ell$ is a line segment joining $z^*$ and $z$, where $z^*$ is either a point on $\partial \Omega$ or $z^*=0$.
Then we express
$u(z)$ as an integral
\begin{equation}\label{eq:ell}
u(z) = 
\int_{\ell} \langle \nabla u, r \rangle |dz| + u(z^*),
\end{equation}
where $\langle \nabla u, r \rangle$ denotes the inner product of $\nabla u$ with the unit vector $r$ in the direction of $\ell$.
This gives the desired estimate
\begin{equation}
|u(z)| \leq 
|z - z^*| |\nabla u| + |u(z^*)| = O(|z|),
\end{equation}
since $|\nabla u| = O(1)$
and $u(z^*)$ is either $u(0)$ or one of the finitely many Dirichlet boundary values.
\end{proof}

Finally, we show positivity of $u$ throughout $\Omega$ which we state as a lemma.  The proof of the theorem will be complete once we prove the lemma.

\begin{lemma}\label{lemma:pos}
The function $u$ defined in \eqref{eq:u} satisfies $u>0$ in $\Omega$.
\end{lemma}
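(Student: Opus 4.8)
The plan is to argue by contradiction and rule out the set where $u$ is negative by a tract-counting argument in the spirit of Denjoy--Carleman--Ahlfors, with Lemma~\ref{lemma:growth} supplying the crucial upper bound on the growth. Suppose $\{u<0\}$ is nonempty and let $V$ be a connected component. First I would record that $V$ cannot be compactly contained in $\overline{\Omega}$: on $\partial V\cap\Omega$ we have $u=0$, while on $\partial V\cap\partial\Omega$ we have $u\ge 0$ by the choice of the constant $C$, so the minimum principle would force $u\ge 0$ throughout a bounded $V$. Hence every such component is unbounded and escapes to infinity. Moreover, since $\nabla u$ equals the inward unit normal on $\partial\Omega$, the normal derivative equals $1>0$, so $u$ strictly increases as one moves into $\Omega$ from the boundary; thus $u>0$ in a one-sided neighborhood of $\partial\Omega$, and $V$ is an interior tract whose finite boundary is the level set $\{u=0\}\cap\Omega$.

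On such a tract the function $v:=-u$ is positive and harmonic, vanishes on $\partial V\cap\Omega$, and satisfies $v=O(|z|)$ by Lemma~\ref{lemma:growth}. Writing $\theta(r)$ for the angular measure of $V\cap\{|z|=r\}$, the Carleman length--area estimate underlying \cite[Thm. II]{Kjellberg} yields a lower bound of the form
$$\log\Big(\max_{V\cap\{|z|=r\}} v\Big)\ \ge\ \pi\int_{r_0}^{r}\frac{dt}{t\,\theta(t)}+O(1),$$
so that the at-most-linear growth of $v$ forces $\int^{r}\frac{dt}{t\,\theta(t)}\le \tfrac1\pi\log r+O(1)$; in particular the tract must be asymptotically wide, with averaged angular width at least $\pi$.

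Since distinct tracts are disjoint and the total angular measure at radius $r$ is $2\pi$, convexity of $\theta\mapsto 1/\theta$ bounds the number of tracts by $2$, the Denjoy--Carleman--Ahlfors bound for order one; I would further note that the positive region, which occupies a nonempty angular sector along the (nonempty) boundary $\partial\Omega$, competes for the same angular budget, pushing the count down. The remaining task is to exclude the surviving extremal configuration, in which a single tract is asymptotically a half-plane and $v$ grows exactly linearly. This is precisely where I would adapt the proof of \cite[Thm. II]{Kjellberg}: the near-equality in the Carleman estimate pins down the asymptotic geometry of $V$ and the linear profile of $v$, which I would then contradict using the rigidity imposed by the boundary conditions---that $\nabla u$ has unit length and constant Dirichlet data on each component of $\partial\Omega$---together with the single-valuedness of the global extension $h$ of the tangent vector furnished by Lemma~\ref{lemma:tangvec}. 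Once $\{u<0\}=\emptyset$ is established, I would upgrade $u\ge 0$ to $u>0$ by the strong minimum principle: an interior zero would force $u\equiv 0$, contradicting $|\nabla u|=1$ on $\partial\Omega$.

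I expect the extremal-case exclusion to be the delicate step. The crude order-one count alone permits one half-plane tract, so ruling it out genuinely requires the gradient normalization and the global analytic structure of $h$, not merely the growth rate from Lemma~\ref{lemma:growth}; the rest of the argument is a fairly standard combination of the maximum principle with the harmonic-measure growth estimate.
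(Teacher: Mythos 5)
Your proposal correctly identifies the general framework the paper uses (a Denjoy--Carleman--Ahlfors tract count driven by a Carleman-type estimate, with Lemma~\ref{lemma:growth} supplying the order-one growth bound), but it has a genuine gap at exactly the point you flag as delicate, and the gap is not a technicality: with only the single negative tract $V$, the counting argument cannot produce a contradiction, since one tract of linear growth is perfectly consistent with the estimate (think of $-\Re z$ on a half-plane). Your plan to exclude this ``extremal configuration'' by adapting the rigidity in \cite[Thm. II]{Kjellberg}, invoking $|\nabla u|=1$ and the single-valuedness of $h$ from Lemma~\ref{lemma:tangvec}, is left entirely as a promise --- no mechanism is given for how near-equality in the Carleman estimate interacts with the boundary conditions to yield a contradiction. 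Likewise, the assertion that the positive region ``competes for the same angular budget'' is not usable as stated: the DCA count only charges angular width to regions where $|u|$ actually grows, and nothing in your argument shows that $u$ is unbounded above anywhere.

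The paper's proof supplies precisely the missing construction, and in doing so avoids any extremal-case analysis. After reducing to the case where $\partial\Omega$ has an unbounded component (if $\partial\Omega$ is compact, $\Omega$ is the exterior of a disk and the lemma is checked directly), one picks an unbounded boundary component $L$ on which the constant boundary value $m$ of $u$ is maximal, takes the component $\Omega_m$ of $\{u>m\}$ attached to $L$, and cuts it by a Jordan arc $\gamma$ joining $L$ to $\partial R_1$ into two pieces $\Omega_a,\Omega_b$. The key Claim --- proved by mapping $\Omega_a$ conformally onto a circular domain, reflecting $u\circ\phi_a$ across the arc corresponding to $L$, and using $|\nabla u|=1$ on $L$ to bound $|\phi_a'|$ and hence contradict the unboundedness of $L\cap\partial\Omega_a$ --- shows that $u\to\infty$ in each of $\Omega_a$ and $\Omega_b$. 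This produces \emph{three} disjoint unbounded regions on which $|u|$ is large, and then the Carleman/Cauchy--Schwarz computation gives $\log M(r)\geq \tfrac32 \log r + C$, contradicting \eqref{eq:linear} outright; no rigidity or asymptotic-geometry analysis is ever needed. Note that the ingredients you hoped would power the rigidity step (the unit gradient on the boundary, conformal mapping, and the analytic structure of $h$) are exactly the ingredients of the paper's Claim, but they are deployed to \emph{create more tracts}, not to analyze an extremal one. Without that idea, your argument stalls at a case it cannot close.
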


\begin{proof}[Proof of Lemma \ref{lemma:pos}]
Following an idea from the proof of \cite[Thm. II]{Kjellberg} we prove the lemma by utilizing a method from the proof of the Denjoy-Carleman-Ahlfors theorem on asymptotic values of entire functions
(we will follow the presentation from \cite[Ch. 10, Thm. 5.4]{Evgrafov}).
Suppose $u(z_0) < 0$ for some $z_0 \in \Omega$.
Let $R_1$ denote the connected component containing $z_0$ of the set where $u<0$.
Notice that $R_1$ is unbounded 
(otherwise $u<0$ in a bounded region where it has zero boundary values and this violates the maximum principle).

We note that in the case when $\partial \Omega$ is compact, $\Omega$ is a disk.  This follows from \cite[Remark 6.1]{Gust} as noticed in \cite[Proof of Theorem 3.2]{KLT}. Then $\Omega$ admits a roof function (for instance, $u(z) = \log |z|$ when $\Omega$ is the unit disk). Hence, we may assume that at least one connected component of $\partial \Omega$ is unbounded.

Among the unbounded components of $\partial \Omega$, choose $L$ to be one for which the boundary value $m$ of $u$ along $L$ is maximal (recall there are finitely many boundary components and $u$ is constant along each of them). 
Let $S_m$ denote the set of points for which $u>m$.
Then $S_m$ is contained in $\Omega \setminus \ol{R_1}$,
and $S_m$ contains points near each point on $L$
by positivity of the inward normal derivative of $u$.
Among the connected components of $S_m$, let $\Omega_m$ denote the one that has $L \subset \partial \Omega_m$.
Let $\gamma$ denote a Jordan arc through $\Omega \setminus \ol{R_1}$ from a point on $L$ to a point on $\partial R_1$ and consider $\Omega_m \setminus \gamma$
which consists of two regions $\Omega_a$ and $\Omega_b$.

{\bf Claim.} $u \rightarrow \infty$ along a path to infinity in each of the regions $\Omega_a$ and $\Omega_b$.  

Indeed, suppose for the sake of contradiction that $u$ is bounded in one of these domains, say $\Omega_a$. Let $\phi_a : K_a \rightarrow \Omega_a$ be a conformal mapping from a circular domain $K_a$ with the same (finite) connectivity as $\Omega_a$.  Then $u \circ \phi_a$ is the solution to the Dirichlet problem in $K_a$ with continuous boundary values and constant $=m$ boundary values along the arc $\alpha := \phi_a^{-1}(\partial \Omega_a \setminus \gamma)$. Let $\alpha_1$ denote the open subarc $\phi_a^{-1}(L \cap \partial \Omega_a)$ of $\alpha$, and let $w_0$ denote the endpoint of $\alpha_1$ for which $\phi_a(w)$ is unbounded as $w \rightarrow w_0$ (recall that $L \cap \partial \Omega_a$ is unbounded).  By the reflection principle, $u \circ \phi_a$ extends to be harmonic in a neighborhood of $w_0$. This implies that $\nabla ( u \circ \phi)(w)$ is the conjugate of an analytic function in this same neighborhood.  (Here we are viewing the gradient $\nabla = \partial_x + i \partial_y$ in complex form.)  In particular, $\nabla ( u \circ \phi)(w)$ approaches a finite constant $c_0$ as $w \rightarrow w_0$. 
 We have $\nabla (u \circ \phi_a) = \nabla u (\phi_a) \cdot \overline{\phi_a'}$ in $K_a$ (again viewing the gradient vectors in complex form), and $\nabla u (\phi_a)$ is continuous up to the boundary at each point on the open arc $\alpha_1$ with  $|\nabla u(\phi_a)|=1$ along $\alpha_1$.  This implies that $\phi_a' = \overline{\nabla (u \circ \phi_a) / \nabla u (\phi_a)}$ is continuous up to the boundary at each point on $\alpha_1$ and that $|\phi_a'(w)| \rightarrow |c_0|$ as $w \rightarrow w_0$ along $\alpha_1$.  In particular, $|\phi_a(w)|$ is bounded (by a constant independent of $w$) for points $w$ near $w_0$ on the arc $\alpha_1$, a contradiction.  This proves the Claim.


Choosing $M_a$ to be the maximum of $u$ along $\partial \Omega_a$ and $M_b$ the maximum of $u$ along $\partial \Omega_b$, it follows from the Claim that the regions $R_2 := \{ z \in \Omega_a : u(z) > M_a\}$
and $R_3 := \{ z \in \Omega_b : u(z) > M_b \}$
are each nonempty.

We thus have three disjoint regions $R_1$, $R_2$, $R_3$ 
each unbounded, with $u(z) < 0$ in $R_1$, $u(z)>M_a$ in $R_2$, and $u(z)>M_b$ in $ R_3$.

For $k=1,2,3$ we define
$\theta_k(t)$
to be the length of 
$$\{z\in R_k : |z|=t \} .$$

Let $M(r) := \max_{|z| \leq r} |u(z)|$,
and for $k=1,2,3$ let $M_k(r):=\max_{|z| \leq r, z \in R_k} |u(z)|.$

By the Phragmen-Lindelof principle \cite[Thm. 6.1]{Evgrafov} (the theorem is stated for an analytic function $f$ but only relies on the subharmonicity of $\log|f|$ and thus can easily be adapted replacing $\log|f|$ with $|u|$ which is harmonic in each of the regions $R_k$),
we have for $k=1,2,3$
$$\log M_k(r) \geq \pi \int_1^{r} \frac{1}{\theta_k(t)} dt +C ,$$
where $C$ is a constant depending on $u$ but independent of $r$.
We also have for $k=1,2,3$
$\log M(r) \geq \log M_k(r)$,
and hence
\begin{equation}\label{eq:3logM}
3 \log M(r) \geq \pi \int_1^{r} \sum_{k=1}^3 \frac{1}{\theta_k(t)} dt +3C.
\end{equation}

Since
$$\sum_{k=1}^3 \theta_k(t) \leq 2\pi t ,$$
we have (using the Cauchy-Schwarz inequality)
\begin{align*}
  2\pi t \sum_{k=1}^3 \frac{1}{\theta_k} &\geq \sum_{k=1}^3 \theta_k \sum_{k=1}^3 \frac{1}{\theta_k} \\ 
  &\geq  \left( \sum_{k=1}^3 \sqrt{\theta_k} \sqrt{\frac{1}{\theta_k}}\right)^2 = 3^2,
\end{align*}
which implies
$$ \pi \sum_{k=1}^3 \frac{1}{\theta_k(t)} \geq  \frac{9}{2t}. $$
Integrating gives
$$  \pi \int_1^r \sum_{k=1}^3 \frac{1}{\theta_k(t)}dt \geq \int_1^r \frac{9}{2t} dt = (9/2) \log r, $$
and combining this with \eqref{eq:3logM}
we obtain
$$ \log M(r) \geq (3/2) \log r + C.$$
This contradicts \eqref{eq:linear}
which states that $|u(z)| = O(|z|)$
as $z\rightarrow \infty$,
and we conclude that $u>0$ throughout $\Omega$.  This concludes the proof of the lemma.
\end{proof}

Applying Lemma \ref{lemma:pos} completes the proof of the theorem.
\end{proof}

\begin{remark}
The anonymous referee has kindly pointed out that part of the proof of Lemma \ref{lemma:pos} (namely, the part after the proof of the Claim, where a contradiction is derived from the existence of the three regions $R_1,R_2,R_3$) can be simplified using the following result of M. Heins (while taking $n=3$ and for $j=1,2,3$ defining $u_j$ to equal $|u|$ minus its constant boundary values in $R_j$ and zero elsewhere).
\begin{thm}\cite[Thm. 5.1]{Heins}
Let $u_1,u_2,...,u_n$ denote $n \geq 2$ non-constant, non-negative subharmonic functions in the plane which satisfy $\min(u_j,u_k)=0$ for $j \neq k$. Then
$$ \liminf_{r \rightarrow \infty} r^{-n/2} \left( \sum_{k=1}^n \int_0^{2\pi} u_k^2(r e^{i \theta}) d\theta \right)^{1/2} > 0 .$$
\end{thm}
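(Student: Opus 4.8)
The plan is to prove Heins' theorem by the length--area (Carleman) method underlying the Denjoy--Carleman--Ahlfors theorem, combined with a Cauchy--Schwarz argument identical in spirit to the one already used in the proof of Lemma \ref{lemma:pos}, but carried out with $L^2$ means in place of maxima. First I would record the geometric consequence of the hypotheses: setting $D_k=\{u_k>0\}$, the condition $\min(u_j,u_k)=0$ for $j\ne k$ says precisely that the $D_k$ are pairwise disjoint open sets. Each $u_k$ is non-constant and non-negative subharmonic on $\CC$, hence unbounded above (a subharmonic function on $\CC$ bounded above is constant), and each $D_k$ is unbounded (a non-negative subharmonic function whose positivity set is bounded must vanish identically by the maximum principle). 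For $r>0$ let $E_k(r)=\{\theta:re^{i\theta}\in D_k\}$ and $\sigma_k(r)=|E_k(r)|$ its angular measure; disjointness gives $\sum_{k=1}^n\sigma_k(r)\le 2\pi$. Writing $T_k(r)=\int_0^{2\pi}u_k^2(re^{i\theta})\,d\theta=\int_{E_k(r)}u_k^2\,d\theta$, the quantity to bound below is $\sum_k T_k(r)$.

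The analytic heart is a Carleman-type differential inequality for each $T_k$. Since $u_k$ vanishes at the endpoints of every component arc of $E_k(r)$, the one-dimensional Dirichlet eigenvalue of $-d^2/d\theta^2$ on $E_k(r)$ is governed by its longest component arc, whose length is at most $\sigma_k(r)$; hence the Poincar\'e inequality $\int_{E_k(r)}(\partial_\theta u_k)^2\,d\theta\ge (\pi/\sigma_k(r))^2\,T_k(r)$ holds whenever $\sigma_k(r)<2\pi$. Differentiating $T_k$ twice in $r$, substituting $u_{k,rr}=\Delta u_k-\tfrac1r u_{k,r}-\tfrac1{r^2}u_{k,\theta\theta}$ and discarding the favorable term $\Delta u_k\ge 0$, integrating the $u_{k,\theta\theta}$ contribution by parts (boundary terms vanish), and bounding $(T_k')^2\le 4T_k\int_{E_k(r)}u_{k,r}^2\,d\theta$ by Cauchy--Schwarz, I would arrive --- after passing to $s=\log r$ and $\psi=\log T_k$ --- at the Riccati inequality $\psi_{ss}+\tfrac12\psi_s^2\ge 2(\pi/\sigma_k)^2$. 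A standard comparison argument (in the model case of constant width this simply forces $\psi_s\ge 2\pi/\sigma_k$ eventually) then yields the integrated bound
\[ \log T_k(r)\ \ge\ 2\pi\int_{r_0}^{r}\frac{dt}{t\,\sigma_k(t)}\ -\ C_k .\]
The constant $2\pi$ is sharp: for the extremal harmonic function $r^{\pi/\alpha}\sin(\pi\theta/\alpha)$ in a sector of opening $\alpha$ one has equality.

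With these $n$ inequalities in hand the combination mirrors the end of Lemma \ref{lemma:pos}. From $\sum_k\sigma_k(r)\le 2\pi$, Cauchy--Schwarz gives $\sum_k 1/\sigma_k(r)\ge n^2/(2\pi)$, so averaging the $n$ bounds produces $\tfrac1n\sum_k\log T_k(r)\ge n\log r-C$, whence
\[ \sum_{k=1}^n T_k(r)\ \ge\ \max_k T_k(r)\ \ge\ e^{\frac1n\sum_k\log T_k(r)}\ \ge\ c\,r^{n}\]
for all large $r$. This is exactly $r^{-n/2}\big(\sum_k T_k(r)\big)^{1/2}\ge\sqrt c>0$, giving the claimed positive liminf; the exponent $n/2$ and the sharpness of the whole statement are witnessed by $n$ equal sectors each carrying the extremal function above.

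The main obstacle is the rigorous derivation and deployment of the Carleman inequality in the degenerate regime. Two points require care. First, the $u_k$ are only subharmonic with cross-sections that may be unions of arcs of low regularity, so the formal differentiation of $T_k$ and the integration by parts must be justified (via smoothing, or by working with the increasing integral means and handling an exceptional set of radii), and the Riccati comparison must be adapted to the variable width $\sigma_k(t)$. Second, and more seriously, the Poincar\'e step and the log-averaging both break down at radii $r$ where some $\sigma_k(r)=2\pi$, i.e.\ where one positivity set fills the entire circle and forces $T_j(r)=0$ for the remaining indices. Here the hypotheses $n\ge 2$ together with the unboundedness of every $D_k$ are essential: no single $D_k$ can contain all circles of large radius, so these degenerate radii cannot persist, and one must show they contribute negligibly to the integrated bound (equivalently, one may bypass this difficulty by invoking the combined-function device behind Heins' formulation and applying the symmetrization/length--area estimate to the aggregate $\sum_k u_k^2$). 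Controlling this degenerate set is precisely the step where the $\min(u_j,u_k)=0$ hypothesis does its real work, and I expect it to be the most delicate part of the argument.
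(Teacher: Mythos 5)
A preliminary remark: the paper does not prove this statement at all --- it is quoted from Heins (cited as \cite[Thm. 5.1]{Heins}) inside a remark, as a device suggested by the referee for shortening part of the proof of Lemma \ref{lemma:pos}. So your proposal must stand on its own; it cannot be measured against a proof in the paper because there is none. Your overall plan --- Carleman's $L^2$ length--area method combined with the same Cauchy--Schwarz averaging that the paper uses at the end of Lemma \ref{lemma:pos} --- is the right classical route, and the combinatorial half of your argument is correct: granting, for each $k$, the integrated bound $\log T_k(r)\ge 2\pi\int_{r_0}^{r}\frac{dt}{t\,\sigma_k(t)}-C_k$ with $C_k$ \emph{independent of} $r$, the inequality $\sum_k 1/\sigma_k\ge n^2/(2\pi)$, the geometric-mean bound and exponentiation do give $\sum_k T_k(r)\ge c\,r^{n}$, which is the assertion. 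Your derivation of the Riccati inequality $\psi_{ss}+\tfrac12\psi_s^2\ge 2(\pi/\sigma_k)^2$ is also sound (modulo the smoothing issues you acknowledge).

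The genuine gap is the per-function integrated bound itself, which you dispatch as ``a standard comparison argument.'' Heins' theorem is a borderline-sharp statement: the liminf is taken at exactly the critical scale $r^{n/2}$, so nothing less than the exact constant $2\pi$ with a \emph{bounded} error term suffices. If the bound holds only with $2\pi-\e$ in place of $2\pi$, or with an error $C_k(r)$ growing even like $\log r$, your final estimate degrades to $\sum_k T_k(r)\ge c\,r^{n-\delta}$ and yields only $\liminf\ge 0$, which is not the theorem. Your parenthetical justification (that in the constant-width model $\psi_s\ge 2\pi/\sigma_k$ ``eventually'') does not deliver the sharp form: the Riccati inequality together with $\psi_s\ge 0$ only forces $\psi_s\to 2\pi/\sigma$, possibly always from below, and to keep the integrated error bounded one needs the further observation that the deficit $\delta=2\pi/\sigma-\psi_s$ satisfies $\delta'\le-(\pi/\sigma)\delta$ and hence decays exponentially. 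For variable $\sigma_k(t)$ --- the actual situation, since the positivity sets are arbitrary --- this deficit is re-created every time the width drops, and proving that the accumulated loss stays bounded uniformly in $r$ is precisely the content of the Carleman/Ahlfors-distortion inequality: it is the theorem you are proving, not a footnote to it. (The paper's own Lemma \ref{lemma:pos} also outsources the analogous step, to the Phragm\'en--Lindel\"of principle in Evgrafov; but there a lossy constant would still suffice, since $3/2>1$, whereas here it would not.) Conversely, the issue you flag as ``the most delicate part'' --- radii where some $\sigma_j(r)=2\pi$ forces $T_k(r)=0$ for $k\ne j$ --- is in fact the easy part: $u_k^2$ is subharmonic, so its circular mean about the origin is non-decreasing; if $\sigma_k(r_1)=0$ this mean vanishes at $r_1$, hence on $[0,r_1]$, hence $u_k\equiv 0$ on the disk $|z|<r_1$. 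Since $u_k\not\equiv 0$, the set $\{r:\sigma_k(r)=0\}$ is a bounded interval $[0,r_k^*]$, and for $r>\max_k r_k^*$ every width lies strictly between $0$ and $2\pi$, so no degeneration occurs.
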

\end{remark}


\section{Concluding Remarks}\label{sec:cor}

Theorem \ref{thm:main} shows that
in the definition of the roof function
(at least for domains with finitely many boundary components) the condition of positivity can be replaced by a growth condition $u(z) = O(|z|)$
while only imposing positivity on the boundary values,
and the positivity of $u$ follows automatically.
Indeed,
the boundary condition along with the growth condition
imply the arclength NQD condition \eqref{eq:ALNQD} as explained in Section \ref{sec:q}.
Then Theorem \ref{thm:main},
with some attention to the details of its proof, implies positivity of $u$.

Theorem \ref{thm:main} allows immediate application of several results from \cite{EL}
to the classification  of arclength NQDs that we shall summarize below.

Assume that $\partial \Omega$ has finitely many connected components.
Then Theorem \ref{thm:main} shows that $\Omega$ is a quasi-exceptional domain.
This implies \cite{EL} that the number of unbounded components of $\partial \Omega$ is either zero, one, or two, and we have the following partial classification (see \cite{EL}).

\begin{itemize}
 \item $\partial \Omega$ compact $\implies$ $\Omega$ is the exterior of a disk
 \item exactly one component of $\partial \Omega$ is unbounded $\implies$ $\Omega$ is a halfplane
 \item two components of $\partial \Omega$ are unbounded and $\Omega$
 is simply-connected $\implies$ $\Omega$ is the Hauswirth-Helein-Pacard example \cite{HHP}
\end{itemize}

This leaves open the case that two components of $\partial \Omega$ are unbounded and $\Omega$ is multiply-connected. 
This category appears to be the most interesting.
Doubly-connected examples were constructed using elliptic functions in \cite{EL}, where it is conjectured that there exist examples with every connectivity.
See Figure \ref{fig:cases}.

\begin{figure}[ht]
    \centering
\hspace{0.0001in}
    {
\setlength{\fboxsep}{0pt}%
\setlength{\fboxrule}{1pt}%
\fbox{\includegraphics[width=0.4\linewidth]{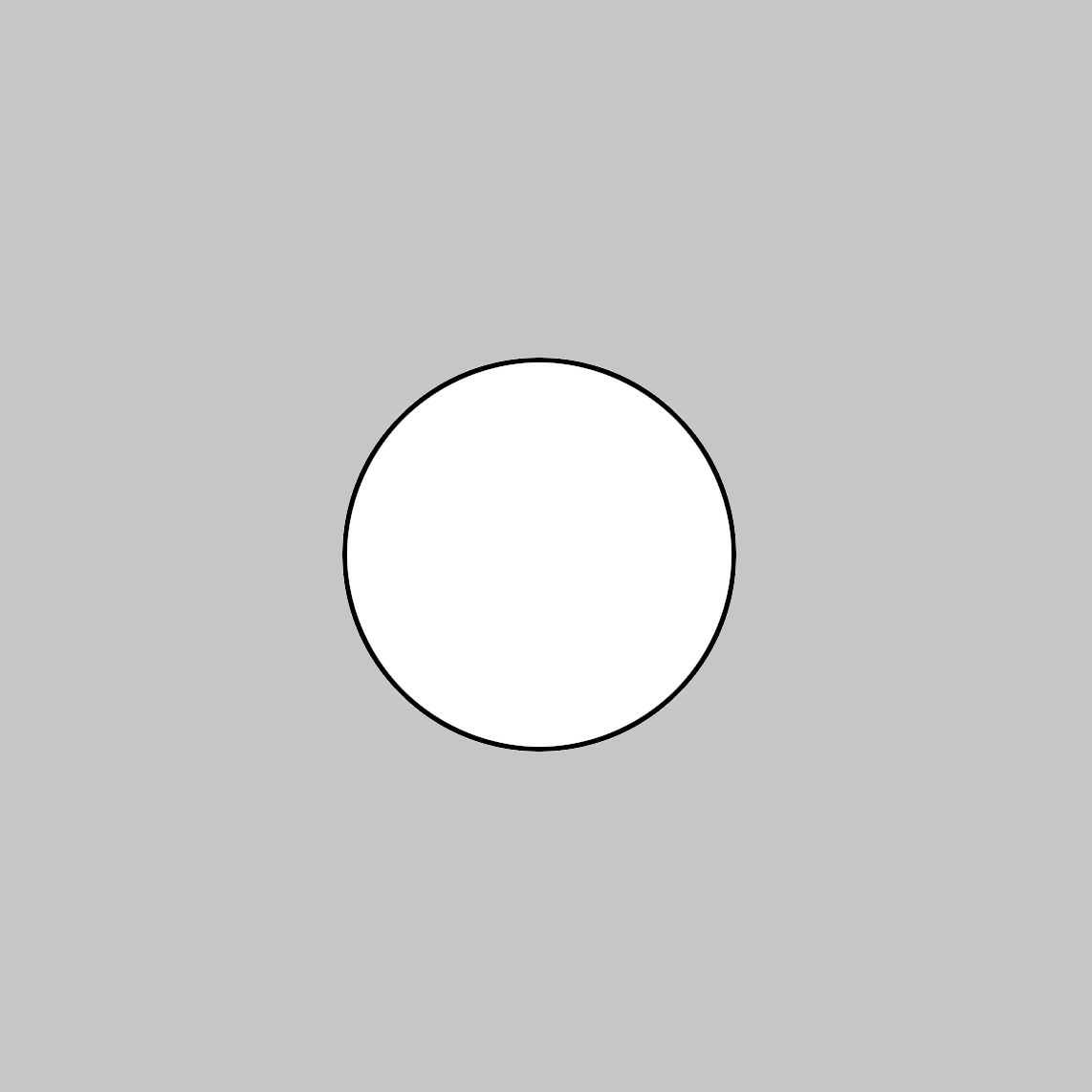}}
}
{
\setlength{\fboxsep}{0pt}%
\setlength{\fboxrule}{1pt}%
\fbox{\includegraphics[width=0.4\linewidth]{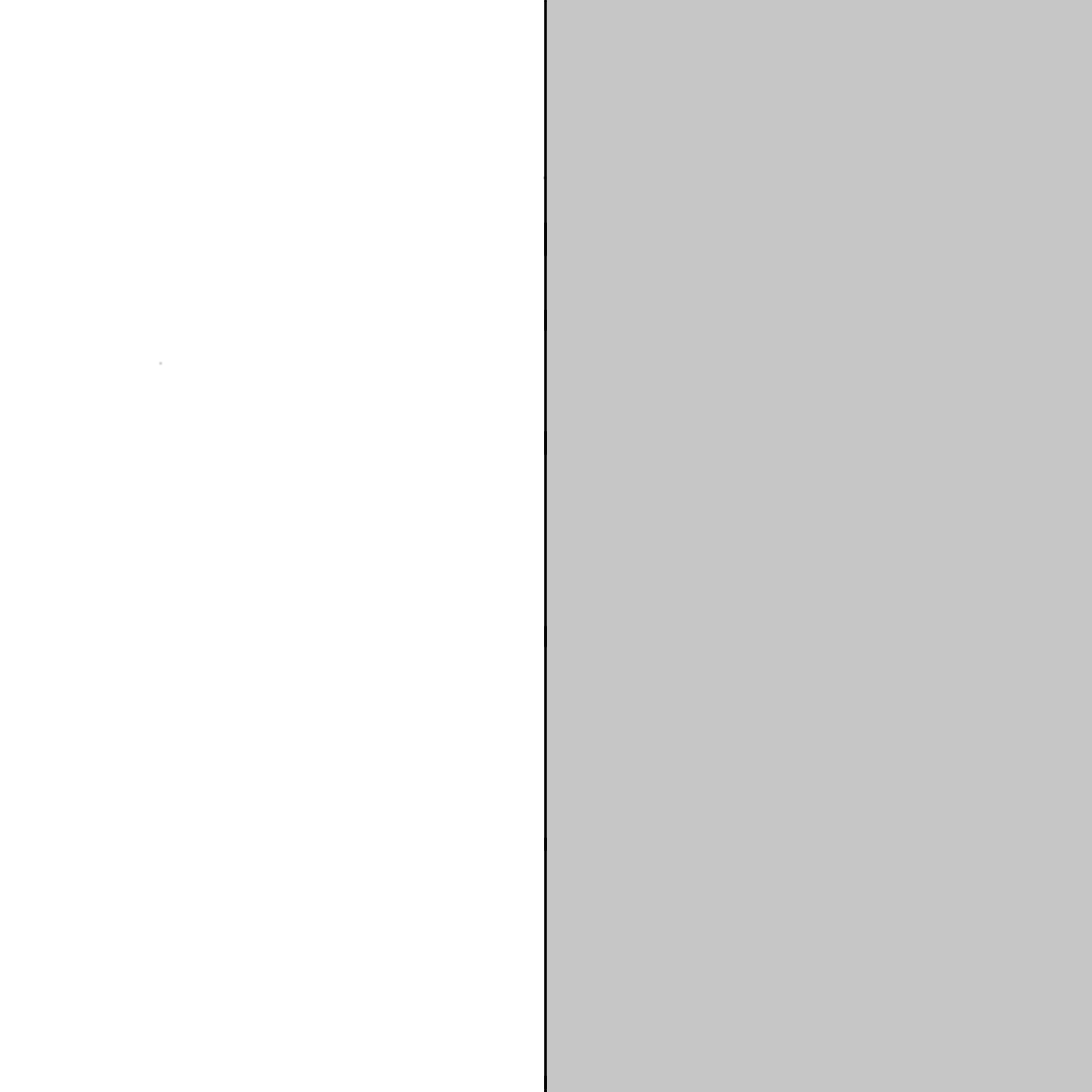}}
} 
\\
\vspace{0.11in}
\hspace{0.04in}
{
\setlength{\fboxsep}{0pt}%
\setlength{\fboxrule}{1pt}%
\fbox{\includegraphics[width=0.4\linewidth]{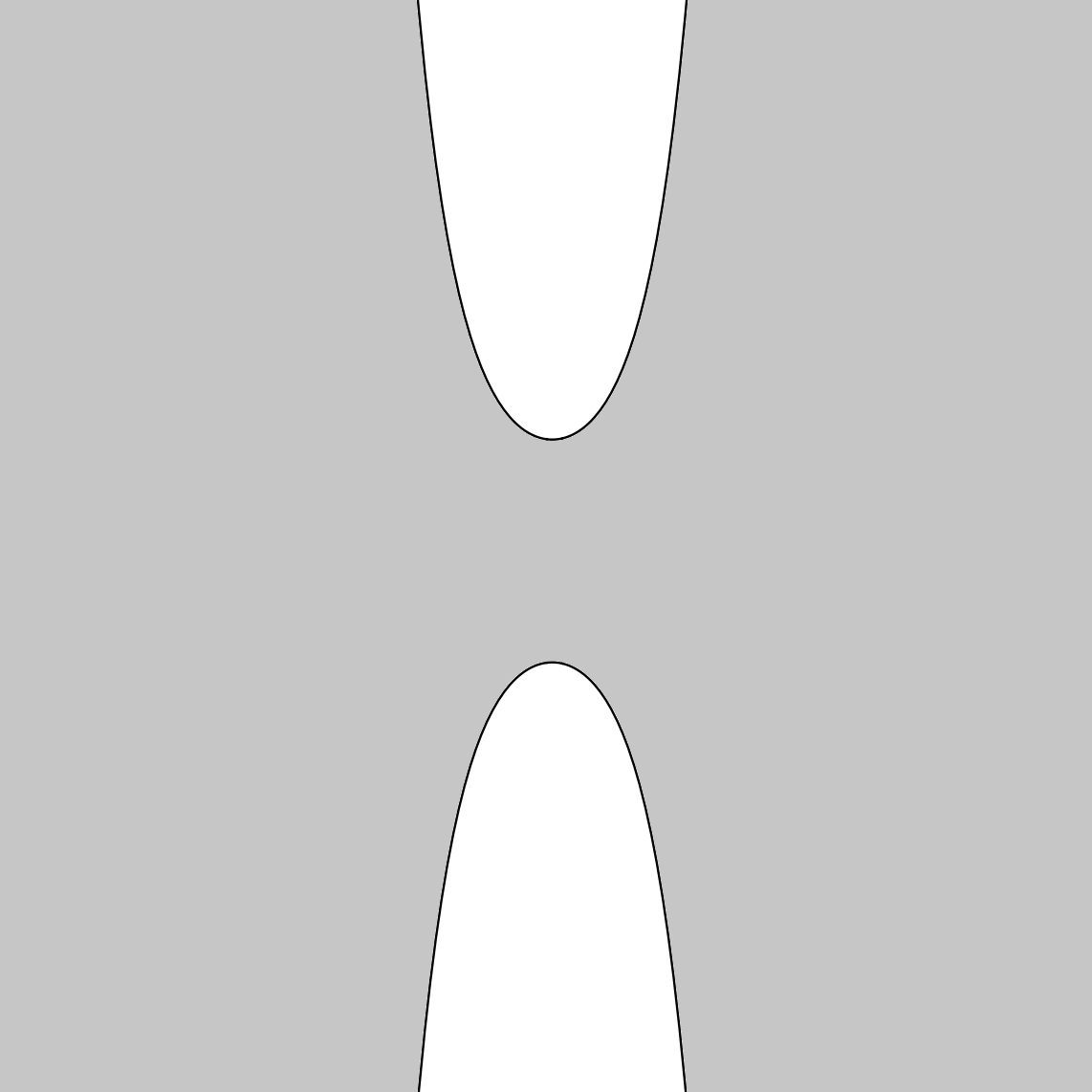}}
}
{
\setlength{\fboxsep}{0pt}%
\setlength{\fboxrule}{1pt}%
\fbox{\includegraphics[width=0.4\linewidth]{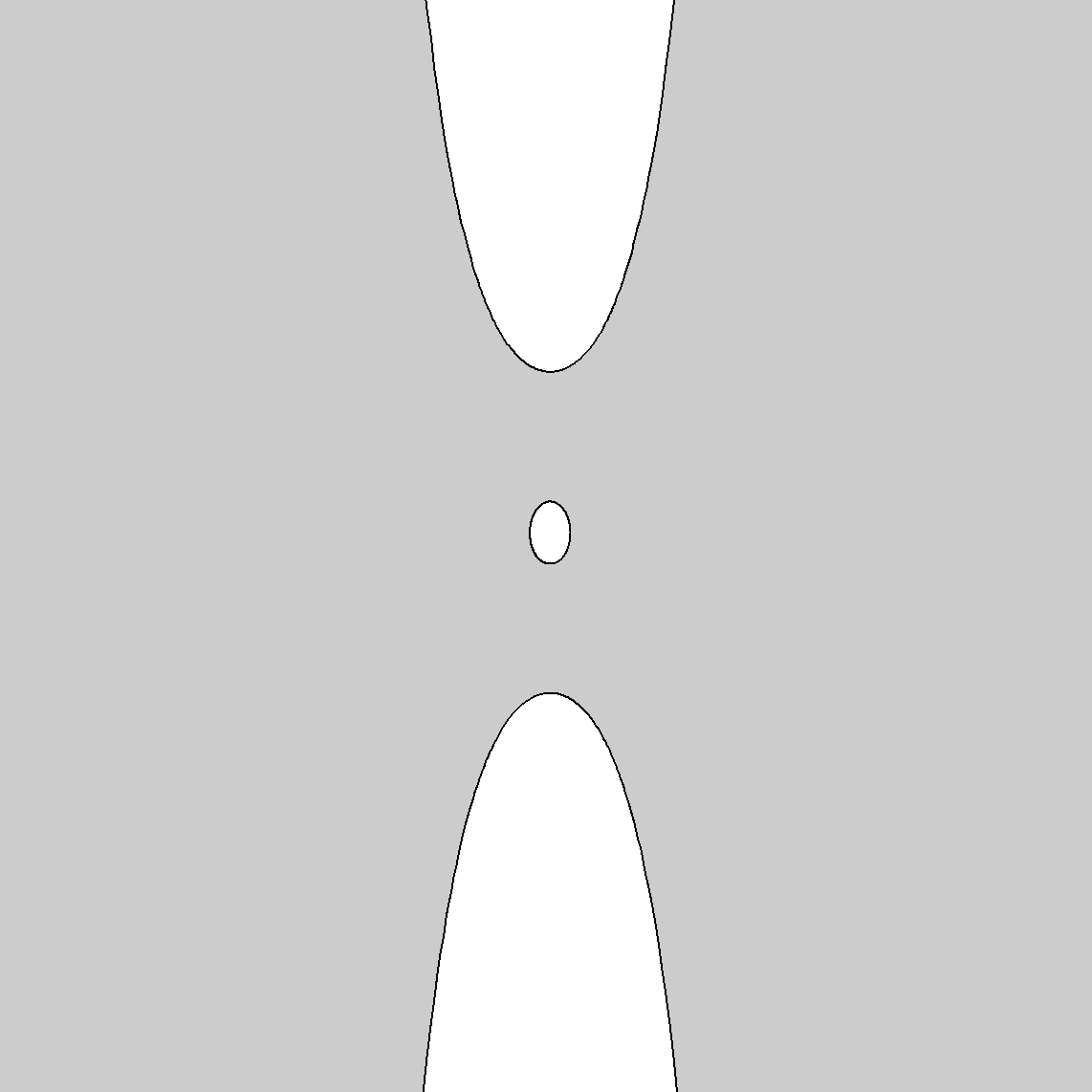}}
}
    \caption{Illustration summarizing progress on classification of arclength NQD with finitely many boundary components.  We have that the number of unbounded components of $\partial \Omega$ is either zero, one or two.  In the case $\partial \Omega$ is compact, $\Omega$ is the exterior of a disk (top left).  In the case exactly one component of $\partial \Omega$ is unbounded, $\Omega$ is a halfplane (top right).  In the case two components of $\partial \Omega$ are unbounded and $\Omega$ is simply-connected, $\Omega$ is (up to translation/rotation) $ \left\{ x+iy \in \CC : - \frac{\pi}{2}-\cosh x < y < \frac{\pi}{2} + \cosh x \right\}$ (bottom left).  The case when two components of $\partial \Omega$ are unbounded and $\Omega$ is multiply-connected remains open.  A doubly-connected example is shown (bottom right).}
    \label{fig:cases}
\end{figure}

\bibliographystyle{abbrv}
\bibliography{NQD}

\begin{thebibliography}{10}

\bibitem{Duren}
P.~L. Duren.
\newblock {\em Theory of {$H^{p}$} spaces}.
\newblock Pure and Applied Mathematics, Vol. 38. Academic Press, New
  York-London, 1970.

\bibitem{Sha}
S.~Eberlei, H.~Shahgholian, and G.~Weiss.
\newblock On global solutions of the obstacle problem -- applications to the
  local analysis close to singularities.
\newblock {\em preprint, arXiv:2005.04915}, 2020.

\bibitem{EL}
A.~Eremenko and E.~Lundberg.
\newblock Quasi-exceptional domains.
\newblock {\em Pacific J. Math.}, 276(1):167--183, 2015.

\bibitem{Evgrafov}
M.~A. Evgrafov.
\newblock {\em Analytic functions}.
\newblock Dover Publications, Inc., New York, 1978.
\newblock Translated from the Russian, Reprint of the 1966 original English
  translation, Edited and with a foreword by Bernard R. Gelbaum.

\bibitem{Fisher}
S.~D. Fisher.
\newblock {\em Function theory on planar domains}.
\newblock Pure and Applied Mathematics (New York). John Wiley \& Sons, Inc.,
  New York, 1983.
\newblock A second course in complex analysis, A Wiley-Interscience
  Publication.

\bibitem{FrSa}
A.~Friedman and M.~Sakai.
\newblock A characterization of null quadrature domains in {${\bf R}^N$}.
\newblock {\em Indiana Univ. Math. J.}, 35(3):607--610, 1986.

\bibitem{Goluzin}
G.~M. Goluzin.
\newblock {\em Geometric theory of functions of a complex variable}.
\newblock Translations of Mathematical Monographs, Vol. 26. American
  Mathematical Society, Providence, R.I., 1969.

\bibitem{Gust}
B.~Gustafsson.
\newblock Application of half-order differentials on {R}iemann surfaces to
  quadrature identities for arc-length.
\newblock {\em J. Analyse Math.}, 49:54--89, 1987.

\bibitem{HHP}
L.~Hauswirth, F.~H\'elein, and F.~Pacard.
\newblock On an overdetermined elliptic problem.
\newblock {\em Pacific J. Math.}, 250(2):319--334, 2011.

\bibitem{Heins}
M.~Heins.
\newblock On a notion of convexity connected with a method of {C}arleman.
\newblock {\em J. Analyse Math.}, 7:53--77, 1959.

\bibitem{JeKa2016}
D.~Jerison and N.~Kamburov.
\newblock Structure of one-phase free boundaries in the plane.
\newblock {\em Int. Math. Res. Not. IMRN}, 19:5922--5987, 2016.

\bibitem{JeKa}
D.~Jerison and N.~Kamburov.
\newblock Free boundaries subject to topological constraints.
\newblock {\em Discrete Contin. Dyn. Syst.}, 39(12):7213--7248, 2019.

\bibitem{Karp}
L.~Karp and A.~S. Margulis.
\newblock Null quadrature domains and a free boundary problem for the
  {L}aplacian.
\newblock {\em Indiana Univ. Math. J.}, 61(2):859--882, 2012.

\bibitem{KeLa}
M.~Keldych and M.~Lavrentiev.
\newblock Sur la repr\'{e}sentation conforme des domaines limites par des
  courbes rectifiables.
\newblock {\em Ann. Sci. \'{E}cole Norm. Sup. (3)}, 54:1--38, 1937.

\bibitem{KLT}
D.~Khavinson, E.~Lundberg, and R.~Teodorescu.
\newblock An overdetermined problem in potential theory.
\newblock {\em Pacific J. Math.}, 265(1):85--111, 2013.

\bibitem{Kjellberg}
B.~Kjellberg.
\newblock On the growth of minimal positive harmonic functions in a plane
  region.
\newblock {\em Ark. Mat.}, 1:347--351, 1950.

\bibitem{Lopen}
E.~Lundberg and R.~Teodorescu.
\newblock Dima khavinson's 60th: a summary of open problems in analysis and
  mathematical physics.
\newblock {\em Analysis and Mathematical Physics}, 8(2):309--324, 2018.

\bibitem{Pom}
C.~Pommerenke.
\newblock {\em Boundary behaviour of conformal maps}, volume 299 of {\em
  Grundlehren der Mathematischen Wissenschaften [Fundamental Principles of
  Mathematical Sciences]}.
\newblock Springer-Verlag, Berlin, 1992.

\bibitem{Sakai}
M.~Sakai.
\newblock Null quadrature domains.
\newblock {\em J. Analyse Math.}, 40:144--154 (1982), 1981.

\bibitem{Henrik}
D.~D. Silva, D.~Jerison, and H.~Shahgholian.
\newblock Inhomogeneous global minimizers to the one-phase free boundary
  problem, 2021.
\newblock preprint, arXiv:2106.14576.

\bibitem{MT1}
M.~Traizet.
\newblock Classification of the solutions to an overdetermined elliptic problem
  in the plane.
\newblock {\em Geom. Funct. Anal.}, 24(2):690--720, 2014.

\bibitem{MT2}
M.~Traizet.
\newblock Hollow vortices and minimal surfaces.
\newblock {\em J. Math. Phys.}, 56(8):083101, 18, 2015.

\bibitem{HaTuDef}
G.~C. Tumarkin and S.~J. Havinson.
\newblock On the definition of analytic functions of class {$E^p$} in multiply
  connected domains.
\newblock {\em Uspehi Mat. Nauk (N.S.)}, 13(4 (76)):201--206, 1958.

\bibitem{HaTu1960}
G.~C. Tumarkin and S.~J. Havinson.
\newblock Classes of analytic functions on multiply connected domains.
\newblock In {\em Issledovaniya po sovremennym problemam teorii funkci\u{\i}
  kompleksnogo peremennogo}, pages 45--77. Gosudarstv. Izdat. Fiz.-Mat. Lit.,
  Moscow, 1960, French transl.: In "Fonctions d'une variable complexe.
  Problemes contemporains", pp. 37-71; Gauthiers-Villars, Paris, 1962.

\end{thebibliography}

\end{document}